\newcommand{\vertiii}[1]{{\left\vert\kern-0.25ex\left\vert\kern-0.25ex\left\vert #1
    \right\vert\kern-0.25ex\right\vert\kern-0.25ex\right\vert}}
\theoremstyle{plain}
\newtheorem{theorem}{Theorem}[section]
\newtheorem{lemma}[theorem]{Lemma}
\newtheorem{cor}[theorem]{Corollary}
\newtheorem{mydef}[theorem]{Definition}
   \renewcommand{\bf}{\bfseries}
   \newcommand{\N}{\mathbb{N}}
   \renewcommand{\epsilon}{\varepsilon}
\theoremstyle{plain}
\subjclass[2020]{Primary 11B75, Secondary 37A40, 37A44}
\begin{document}
 \title{ Collatz map as a non-singular transformation}
   \author{I. Assani}
\thanks{Department of Mathematics, UNC Chapel Hill, NC 27599, assani@email.unc.edu.}
 \begin{abstract}
 Let $T$ be the map defined on $\N=\{1,2,3, ...\}$ by $T(n) = \frac{n}{2} $ if $n$ is even and by $T(n) = \frac{3n+1}{2}$ if $n$ is odd. Consider the dynamical system  $(\N, 2^{\N},  T,\mu)$  where $\mu$ is the counting measure. This dynamical system $(\N, 2^{\N}, T, \mu)$ has the following properties.
\begin{enumerate}
\item  There exists an invariant finite measure $\gamma$ such that 
$\gamma(A) \leq \mu(A) $ for all $A \subset \N.$
\item  For each function $f\in L^1(\mu)$ the averages 
$\frac{1}{N} \sum_{n=1}^N f(T^nx)$  converge for every $x\in \N$ to $ f^*(x)$ where $ f^* \in L^1(\mu).$
\end{enumerate}
We also show that the Collatz conjecture is equivalent to the existence of a finite measure $\nu$ on $(\N, 2^{\N})$ making the operator $Vf = f\circ T$ power bounded in $L^1(\nu)$ with conserrvative part $\{1,2\}.$ 
\end{abstract}
 \maketitle

  \section{Introduction}
The original Collatz conjecture states that if $S$ is the map defined on $\N$ by $Sn = \frac{n}{2}$ if $n$ is even and by $3n +1$ if $n$ is odd , then for each natural number $n$ there exists $k\in\N$ such that $T^kn = 1.$ This conjecture has been extensively studied . See the nice survey and analysis  on this subject done by J. Lagarias \cite{JLag1}, \cite{JLag2}. See also Y. Sinai \cite{YSin} and E. Akin \cite{EA}.  As noted in \cite{JLag1} S. Kakutani, S. Ulam and P. Erdos had been interested in this problem.  Several  attempts have been made after Lagarias survey \cite{JLag2} and some offering equivalent formulations. See \cite{BO}, \cite{KS}, \cite{GV}, \cite{BK}, \cite{KL}. An operator theoretic approach was presented in \cite{LP}.\\
   An equivalent and more convenient map to study this conjecture is defined as $Tn = \frac{n}{2} $ if $n$ is even and $T n = \frac{3n+1}{2}$ if $n$ is odd.  A cycle for $T$ is a sequence ${a, Ta, ....T^{i-1}a}$ where $a\in \N,$ $T^ja\neq a$ for $ 1\leq j \leq i-1$ and  $T^i a = a.$ Since $T1 = 2$ and $T2=1$ , $\{1,2\}$ is a cycle. \\
The Collatz conjecture is equivalent to the combination of the following conjectures. 
\begin{enumerate}
\item The only cycle for $T$ is $\{1, 2\}$ 
\item The orbit of every $n\in\N$ under $T$  is bounded.
\end{enumerate}

  The Collatz conjecture as stated is a recurrence property to the set $\{1,2\}$. More precisely it states that from any point $n \in \N\backslash \{1,2\}$ iterates $T^k(n)$ will after finite iterates go to the set 
$\{1, 2\}.$  Such recurrence properties have been studied quite extensively in Ergodic Theory in the context of measure preserving transformations but also in the context of  non-singular transformations which in our view fits better the Collatz map. We recall that  if $S$ is a measurable map from the $\sigma$-finite measure space $(X, \mathcal{A}, \nu)$ to itself then the set $(X, \mathcal{A}, \nu, S)$ is called a dynamical system. The map $S$ is said to be non-singular if  for all $A\in \mathcal{A}$ we have $\nu(S^{-1}(A)) =0$ if $\nu(A) =0.$
Such systems are called null preserving in \cite{Kreng} p.3. In this note we will focus on the measurable space $(\N, 2^{\N})$ where $2^{\N}$ denotes the power set of $\N,$ set of natural numbers without zero. The Collatz map $T$  is clearly measurable with respect to this measurable space. 
\begin{mydef}
 The set $(\N, 2^{\N}, T, \nu)$ with $\nu$ a $\sigma$-finite measure for which $T$ is non-singular is called the Collatz dynamical system with the measure $\nu$.
\end{mydef}
  A nonnegative measure $\nu$ on $(\N, 2^{\N})$ is defined by its values on $\N$ . It is entirely defined by the sequence of nonnegative numbers $\nu(n)$ where $n\in\N.$ We assume that $\nu(n) < \infty$ for each n. The measure $\nu$ is $\sigma$-finite is $\sum_{n=1}^{\infty} \nu(n) = \infty$ and finite if $\sum_{k=1}^{\infty} \nu(n) <\infty.$  A natural $\sigma$-finite measure on $(\N, 2^{\N})$
is given by the counting measure $\mu$ where $\mu(n) = 1$ for each $n\in \N.$ Since this counting measure has only the empty set as nullset the map $T$ is non-singular with respect this measure. Another measure $\theta$ on $(\N, 2^{\N})$ is equivalent to the counting measure if $\theta(n) >0$ for each $n\in\N.$ \\ 
Our main result in this short paper is the following.

\begin{theorem}
     Let  $(\N, 2^{\N}, T, \mu)$ be the Collatz dynamical system with the counting measure $\mu$. The following are equivalent.
\begin{enumerate}
 \item There exists a finite measure $\alpha$ equivalent to $\mu$ for which the dynamical system $(\N, 2^{\N}, T, \alpha)$ is power bounded in $L^1(\alpha)$ with conservative part $\{1,2\}.$
\item  For each $n\in\N$ there exists $k$ such that $T^k(n)\in \{1,2\}.$
\end{enumerate} 
\end{theorem}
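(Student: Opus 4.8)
The plan is to translate both conditions into concrete statements about the weights $a_n := \alpha(\{n\}) > 0$ and then argue each implication from the tree-plus-cycle geometry of the Collatz graph. First I would record the basic dictionary. Writing $b_m^{(k)} := \alpha(T^{-k}\{m\}) = \sum_{n : T^k n = m} a_n$, a change of variables gives $\|V^k\|_{L^1(\alpha)} = \sup_m b_m^{(k)}/a_m$ for the composition operator $Vf = f\circ T$, so $V$ is power bounded if and only if $C := \sup_{k,m} b_m^{(k)}/a_m < \infty$. I would also note that, since $\alpha$ is equivalent to $\mu$ and $T$ is deterministic, the Hopf decomposition is elementary here: $\{m\}$ is a wandering set whenever $m$ is not periodic (its preimages $T^{-k}\{m\}$ are pairwise disjoint, as $T^{i}n=T^{j}n=m$ with $i<j$ would force $T^{j-i}m=m$), while a period-$p$ point satisfies $T^{p}m=m$ and is recurrent. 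Hence the conservative part is exactly the union of the $T$-cycles, and "conservative part $\{1,2\}$" is precisely the assertion that $\{1,2\}$ is the only cycle.

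For (1) $\Rightarrow$ (2), fix $n$ and suppose its forward orbit never meets $\{1,2\}$. If the orbit set is finite it is eventually periodic, entering a cycle disjoint from $\{1,2\}$, contradicting that the conservative part is $\{1,2\}$. Otherwise the points $n_j := T^j n$ are pairwise distinct. The key inequality is this: since $T^j n = n_j$ we have $n \in T^{-j}\{n_j\}$, so $b_{n_j}^{(j)} \ge a_n$, and power boundedness gives $a_n \le b_{n_j}^{(j)} \le C\, a_{n_j}$ for every $j$. Thus $a_{n_j} \ge a_n/C > 0$ for infinitely many distinct indices, contradicting $\sum_j a_{n_j} \le \alpha(\N) < \infty$. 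Hence every orbit reaches $\{1,2\}$.

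For (2) $\Rightarrow$ (1), I would build $\alpha$ from the first-passage time $\tau(n) := \min\{k \ge 0 : T^k n \in \{1,2\}\}$, finite for all $n$ by hypothesis, setting $a_n := \rho^{\tau(n)}$ with a fixed $\rho \in (0,\tfrac12)$. This is positive and summable: each integer has at most two $T$-preimages, so $\#\{n : \tau(n) = t\} \le 2^{t}$ and $\sum_n a_n \le 2 + \sum_{t\ge1} (2\rho)^{t} < \infty$, giving a finite measure equivalent to $\mu$. The same branching bound controls backward trees: for $m \notin \{1,2\}$ every preimage of $m$ is transient with $\tau$ one larger, so the descendants of $m$ at backward distance $k$ number at most $2^{k}$ and each carries mass $a_m\rho^{k}$; since a transient point is hit at most once along any orbit, $\sum_k b_m^{(k)} = \alpha(\mathcal A(m)) \le a_m\sum_{k\ge0}(2\rho)^k = a_m/(1-2\rho)$, where $\mathcal A(m)$ is the set of integers whose orbit meets $m$. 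Hence $b_m^{(k)} \le a_m/(1-2\rho)$ for all $k$, while for the two cycle points one simply uses $b_m^{(k)} \le \alpha(\N)$; with $C = \max\{(1-2\rho)^{-1}, \alpha(\N)\}$ the operator is power bounded. By the dichotomy recorded above the conservative part is exactly $\{1,2\}$, since cycle points are recurrent and each transient $m$ is dissipative.

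The step I expect to be most delicate is the construction in (2) $\Rightarrow$ (1). The cycle forces $C > 1$: no excessive measure $\alpha \circ T^{-1} \le \alpha$ can exist, because the external preimage $4 \in T^{-1}\{2\}$ together with $T^{-1}\{1\}=\{2\}$ would force a negative weight at $4$. Consequently power boundedness cannot be obtained by iterating a one-step estimate (that yields only the useless bound $b_m^{(k)} \le C^k a_m$); one must instead bound each $b_m^{(k)}$ by the total subtree mass $\sum_k b_m^{(k)}$, and the geometric decay $a_n = \rho^{\tau(n)}$ with $\rho < \tfrac12$ is exactly what keeps that total comparable to the root weight $a_m$ uniformly in $k$. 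Checking that the decay is consistent with $\tau$ increasing by one along every backward edge, which fails only at the cycle and must be treated separately, is the crux of making the estimate uniform.
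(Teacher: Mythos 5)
Your proposal is correct, and it splits interestingly against the paper's argument. For (2)$\Rightarrow$(1) the two proofs are essentially the same construction: the paper assigns each node of the backward tree rooted at $4$ one quarter of its successor's weight, which is exactly your $a_n=\rho^{\tau(n)}$ with $\rho=\tfrac14$ up to normalization, and both proofs rest on the same mechanism --- a decay rate strictly beating the branching factor $2$ of $T^{-1}$ gives $\alpha(T^{-1}A)\le 2\rho\,\alpha(A)$ off the cycle, with the cycle $\{1,2\}$ handled separately by a crude total-mass (or geometric-series) bound. Your observation that no excessive measure exists (so $C>1$ is forced and one must bound $b_m^{(k)}$ uniformly rather than iterate a one-step estimate) is a correct and worthwhile gloss on why this is the right shape of argument. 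For (1)$\Rightarrow$(2) you genuinely diverge: the paper invokes the general recurrence theorem for $L^1$-power-bounded nonsingular systems on finite measure spaces (its Theorem \ref{PB}, citing Dunford--Schwartz), whereas you give a self-contained two-line argument --- along an infinite injective orbit, $a_n\le C a_{T^jn}$ forces infinitely many atoms of mass at least $a_n/C$, contradicting $\alpha(\N)<\infty$. Your route is more elementary, exploits the purely atomic setting, and makes transparent exactly where finiteness of $\alpha$ enters; the paper's route buys generality at the cost of an external citation. The only blemishes are cosmetic: the count $\#\{n:\tau(n)=t\}$ should be bounded by $2^{t+1}$ rather than $2^t$ (two base points), and the detour through the subtree sum $\sum_k b_m^{(k)}$ is unnecessary since the direct bound $b_m^{(k)}\le(2\rho)^k a_m$ is already available; neither affects correctness.
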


Our main approach for the proof will come from Ergodic Theory.  More precisely we will consider the dynamical system $(\N, 2^{\N}, T, \mu)$ where $\mu$ is the counting measure, and study recurrence properties of the Collatz map $T$ on the measurable space $(\N, 2^{\N}).$ 
The paper is organized as follows.
\begin{enumerate}[(i)]
\item In the second section we recall some tools from ergodic theory that we will use later. These tools include Hopf's decomposition, power bounded non-singular transformations and aymptotically mean bounded non-singular transformations.  
\item In the third section we apply these tools to the dynamical system $(\N, 2^{\N}, T, \mu)$ and obtain a partition of $\N$ into three sets $C,$ $D_1$ and $D_2$. The set $C$ is an at most countable union of cycles. Elements in $D_1$  after finitely iterates of $T$ enter $C.$ Finally the invariant set $D_2$ under $T$ (ie. $T^{-1}(D_2)= D_2$) is the set of elements with unbounded trajectories. 
We establish ergodic properties of the dynamical system $(\N, 2^{\N}, T, \mu).$
This decomposition is actually valid for all map $V: \N \rightarrow \N$   
\item In the fourth section we prove that Collatz conjecture is equivalent to the search of a finite measure $\nu$ on $(\N, 2^{\N})$ making $T$ power bounded and non-singular.  
\end{enumerate} 
   \noindent{\bf Acknowledgments} We thank the anonymous referees for their helpful suggestions, comments and questions that have improved this paper. 
\section{Hopf Decomposition, recurrence property of power bounded non-singular transformations and asymptotically mean bounded non-singular transformations}
   \subsection{Hopf Decomposition} 
   In this subsection we recall some tools and definitions used in ergodic theory that we will apply in the next section to the dynamical system $(\N, 2^{\N}, T, \mu).$ 
\begin{mydef}
   Consider $(X, \mathcal{A}, \rho)$ a $\sigma$-finite measure space and $V$ a measurable map from $X$ to $X.$ The map $V$ is said to be non-singular with respect to $\rho$ if for each measurable subset of $\mathcal{A},$ we have   $\rho(V^{-1}(A)) = 0$ whenever $\rho(A) = 0.$ 
\end{mydef}
\begin{mydef}
   A set $C$ is said to be $V$-absorbing if $ C \subset V^{-1}(C).$   
\end{mydef}
  It is clear that $T$ is non-singular with respect to the counting measure $\mu,$ and to any other measure $\nu$ which is equivalent to $\mu.$ Actually for the measure space $(\N, 2^{\N}, \mu)$ the only null set is the empty set. 
\begin{mydef} 
  A measurable subset $W\in \mathcal{A}$ is said to be wandering if $V^{-i}(W)\cap V^{-j}(W) $ is empty whenever $i, j\in \N$ and $i\neq j.$  
\end{mydef} 
  Almost every point of a wandering set $W$ never returns to $W$ under $V$. This means that for $x\in W$ ,  $V^nx \notin W$ for each $n\in\N$. \\
The next theorem is the "Hopf decomposition" applied to a dynamical system $(X, \mathcal{A}, \rho, V)$ where $V$ is non-singular with respect to the $\sigma$-finite measure $\rho.$ A proof of the "Hopf's Decomposition Theorem" can be found in Krengel 's book \cite{Kreng} page 17.
\begin{theorem}\label{HD}(Hopf decomposition)
 Let $(X, \mathcal{A}, V, \rho)$ be a non-singular dynamical system. The space $X$ can be decomposed into two disjoint measurable subsets $C$ and $D$ with the following properties; 
\begin{enumerate}
\item $C$ is $V$-absorbing.
\item The restriction of $V$ to  $C,$ is conservative (the map $V$ returns (a.e.) to  every subset of $C$ with positive measure  infinitely often) . In particular if $\rho(m)>0$ for every $m\in C$ then there exists $k(m)\in \N$ such that  $V^{k(m)}(m) = m.$) 
\item The set $D= C^c$ is called the dissipative part. It is at most a countable union of wandering sets $W.$ 
\end{enumerate}
\end{theorem}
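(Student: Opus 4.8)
The plan is to reduce the statement to a theorem about the linear \emph{transfer operator} associated with $V$ and then read off the three geometric conclusions. First I would build the operator: since $V$ is nonsingular, for $f \in L^1(\rho)$ with $f \ge 0$ the set function $A \mapsto \int_{V^{-1}(A)} f\, d\rho$ is a finite measure absolutely continuous with respect to $\rho$, so Radon--Nikodym produces a unique $Pf \in L^1(\rho)$ with $\int_A Pf\, d\rho = \int_{V^{-1}(A)} f\, d\rho$ for all $A \in \mathcal{A}$. Extending by linearity yields a positive contraction $P$ of $L^1(\rho)$ which is dual to the Koopman operator $Uf = f\circ V$, in the sense that $\int g\,(Pf)\, d\rho = \int (Ug)\, f\, d\rho$ for $g \in L^\infty(\rho)$, $f\in L^1(\rho)$. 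Fixing once and for all a strictly positive density $p \in L^1(\rho)$, I would introduce the \emph{sum function} $h = \sum_{n\ge 0} P^n p \in [0,\infty]$ and set
\[ C = \{\, h = \infty \,\}, \qquad D = \{\, h < \infty \,\}. \]
The identity $h = p + Ph$ is the relation I would exploit throughout.

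The analytic heart of the argument --- and the step I expect to be the main obstacle --- is to show that this decomposition is intrinsic and that it genuinely separates recurrent from transient behaviour. Concretely I would prove, via Hopf's maximal ergodic inequality (equivalently the Chacon--Ornstein filling scheme), that for \emph{every} $q \in L^1(\rho)$ with $q \ge 0$ one has $\sum_{n\ge0} P^n q < \infty$ a.e.\ on $D$, while $\sum_{n\ge0} P^n q \in \{0,\infty\}$ a.e.\ on $C$. This simultaneously makes $C$ and $D$ independent (mod $\rho$) of the chosen density $p$ and shows that $P$ is conservative on $C$ and dissipative on $D$. Establishing the maximal inequality for a merely nonsingular, non measure-preserving and non-invertible $V$ is the delicate point, since one cannot invoke invariance of $\rho$; all of the real work is here, and for the filling-scheme estimates I would follow Krengel's treatment.

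With the dichotomy in hand the three conclusions follow without further difficulty. The defining relation $h = p + Ph$ together with the duality between $P$ and $U$ give the sub-invariance that no mass can leave $C$ under $V$, that is $C \subseteq V^{-1}(C)$, so $C$ is absorbing. Conservativity of $P$ on $C$ translates, through the Halmos recurrence theorem for $U$, into the assertion that $V$ returns almost every point of any positive-measure subset of $C$ to that subset infinitely often; in the purely atomic setting where each point of $C$ carries positive mass this forces every such $m$ to be periodic, $V^{k(m)}(m) = m$. Finally, on $D$ the a.e.\ finiteness of $\sum_n P^n p$ lets me manufacture a wandering set $W \subseteq D$, i.e.\ one with $V^{-i}(W) \cap V^{-j}(W) = \emptyset$ for $i \ne j$, and a sweeping-out argument shows $D = \bigcup_{n\ge 0} V^{-n}(W)$ up to a null set. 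Since each backward image $V^{-n}(W)$ is again wandering, this exhibits $D$ as an at most countable union of (pairwise disjoint) wandering sets, completing the decomposition.
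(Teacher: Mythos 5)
The paper never proves this statement: it is quoted with a pointer to Krengel's book (p.~17), so there is no internal argument to compare yours against. Your route --- the transfer operator $P$ dual to $Uf=f\circ V$, the sum function $h=\sum_{n\ge 0}P^np$ for a strictly positive $p\in L^1(\rho)$, and the dichotomy $C=\{h=\infty\}$, $D=\{h<\infty\}$ --- is the standard Hopf decomposition for positive $L^1$-contractions specialized to $P$, and in outline it does deliver conclusions (1) and (2) once the operator-conservative part is identified with the complement of the union of all wandering sets. One remark: the step you single out as the main obstacle is not actually delicate. Hopf's maximal ergodic lemma and the Chacon--Ornstein filling scheme are theorems about arbitrary positive contractions of $L^1$; no invariance of $\rho$ is used, and the transfer operator of a nonsingular map is such a contraction (indeed $\|Pf\|_1=\|f\|_1$ for $f\ge 0$). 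Since you defer that input to Krengel anyway, your write-up is an outline rather than a self-contained proof, which puts it on the same footing as the paper's citation.

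There is, however, a genuine gap in your treatment of conclusion (3). You claim to produce a single wandering set $W\subseteq D$ with $D=\bigcup_{n\ge 0}V^{-n}(W)$ up to a null set. That is false for non-invertible maps, and it already fails on the very measure space of this paper: take $X=\N$ with counting measure and $V(n)=n+1$. Here every wandering set has at most one element $\ge 2$ (if $2\le a<b$ both lie in $W$, then $a-1\in V^{-1}(W)\cap V^{-(b-a+1)}(W)$), the whole space is dissipative ($h(a)=\sum_{m\le a}p(m)<\infty$), yet $\bigcup_{n\ge 0}V^{-n}(W)$ is a finite set for every wandering $W$, so no single wandering set sweeps out $D$ by backward images. (Forward images would work in this example, but forward images of measurable sets are neither measurable nor wandering in general.) The correct argument is the exhaustion used in Krengel's own proof: pass to a finite measure equivalent to $\rho$, choose recursively pairwise disjoint wandering sets $W_1,W_2,\dots$ in $D$ each of measure at least half the supremum available among wandering sets disjoint from the previously chosen ones, and show that the residual set $D\setminus\bigcup_kW_k$ contains no wandering set of positive measure and is therefore null. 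With that replacement (and granting the Krengel inputs you cite), the rest of your proposal goes through.
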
 
 One can remark that "Hopf Decomposition" is the same for each measure equivalent to $\rho.$
  \subsection{ $L^1$ Power bounded non-singular transformation} 
The second tool from Ergodic Theory concerns power bounded non-singular transformations.
\begin{mydef}
   Let $(X, \mathcal{A}, V, \rho)$ be a non-singular dynamical system. It is power bounded in $L^1(\rho)$ if there exists a finite constant $M$ such that 
$\rho(V^{-n}(A)) \leq M \rho(A)\, \text{for all}\, A \in \mathcal{A}\, \text{and for all} \, n \in\N.$
\end{mydef}
 Power bounded non-singular dynamical systems on finite measure spaces have nice recurrence properties expressed in the next theorem.
\begin{theorem}\label{PB}
Let $(X, \mathcal{A}, \rho, V)$ be a non-singular dynamical system.  Assume that $\rho(X) <\infty$ and
that this dynamical system is power bounded in $L^1(\rho).$
Then 
\begin{enumerate}
\item There exists $v_0^* \in L_+^1(\rho),$  such that $\int \mathbb{1}_A v_0^*d\rho = \int \mathbb{1}_{A}\circ V v_0^*d\rho$ 
\item The conservative part $C$ of $(X, \mathcal{A}, \rho, V)$ is equal to the set where $v_0^*>0.$
\item For $\rho$ a.e. $x \in D$ there exists $m(x) \in \N$ such that $V^{m(x)}(x) \in C.$ 
\end{enumerate} 
 \end{theorem}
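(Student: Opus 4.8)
The plan is to pass to the transfer (Frobenius--Perron) operator $P\colon L^1(\rho)\to L^1(\rho)$ associated to $V$, defined by the duality $\int Pf\cdot g\,d\rho=\int f\cdot(g\circ V)\,d\rho$ for $f\in L^1(\rho)$, $g\in L^\infty(\rho)$ (well defined since $V$ is nonsingular). Iterating the defining relation gives $\int_A P^nf\,d\rho=\int_{V^{-n}(A)}f\,d\rho$ for every $A\in\mathcal A$, and taking $f=\mathbb{1}$ shows $\int_A P^n\mathbb{1}\,d\rho=\rho(V^{-n}(A))$. Hence the power boundedness hypothesis $\rho(V^{-n}(A))\le M\rho(A)$ is equivalent to the uniform pointwise bound $P^n\mathbb{1}\le M$ $\rho$-a.e.\ for all $n$. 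Note also that $P$ is a positive contraction with $\int Pf\,d\rho=\int f\,d\rho$ for $f\ge 0$, and that the identity to be proved in (1), $\int \mathbb{1}_A v_0^*\,d\rho=\int(\mathbb{1}_A\circ V)\,v_0^*\,d\rho$, is exactly the fixed point equation $Pv_0^*=v_0^*$.

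For (1), I would form the Ces\`aro averages $A_N=\frac1N\sum_{n=0}^{N-1}P^n$ and apply them to $\mathbb{1}$. Because $\rho(X)<\infty$ and $0\le P^n\mathbb{1}\le M$, the family $\{P^n\mathbb{1}\}_n$, and hence $\{A_N\mathbb{1}\}_N$, is bounded in $L^\infty$ and therefore uniformly integrable; by the Dunford--Pettis theorem it is relatively weakly compact in $L^1(\rho)$. Extract a subsequence $A_{N_k}\mathbb{1}\rightharpoonup v_0^*\in L^1_+(\rho)$. Since $PA_N\mathbb{1}-A_N\mathbb{1}=\frac1N(P^N\mathbb{1}-\mathbb{1})\to 0$ in $L^1$-norm and $P$ is weakly continuous (being bounded), the weak limit satisfies $Pv_0^*=v_0^*$. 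Testing the weak convergence against $\mathbb{1}\in L^\infty$ and using $\int A_N\mathbb{1}\,d\rho=\rho(X)$ gives $\int v_0^*\,d\rho=\rho(X)>0$, so $v_0^*$ is a genuine nonzero invariant density, which is (1).

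For (2), consider first the inclusion $\{v_0^*>0\}\subseteq C$. Applying the invariance identity to $A=\{v_0^*=0\}$ shows $V^{-1}(\{v_0^*=0\})\subseteq\{v_0^*=0\}$; equivalently $E:=\{v_0^*>0\}$ is $V$-absorbing with $V(E)\subseteq E$. On $E$ the finite measure $v_0^*\,d\rho$ is $V$-invariant and equivalent to $\rho$, so Poincar\'e recurrence makes $V|_E$ conservative, forcing $E\subseteq C$ (a conservative absorbing set meets the dissipative part only in a null set). For the reverse inclusion I would take $v_0^*$ to have support of maximal $\rho$-measure among all finite invariant densities (a maximizer exists by forming a convergent combination of a maximizing sequence). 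If $\rho(C\setminus E)>0$, consider $G=\{x\in C: V^nx\notin E\text{ for all }n\ge0\}$, which is absorbing and contained in $C$. If $\rho(G)>0$, applying step (1) to the restricted system on $G$ produces a nonzero invariant density supported in $C\setminus E$, contradicting maximality of $E$; if $\rho(G)=0$, then a.e.\ point of the positive-measure set $C\setminus E$ enters the absorbing set $E$ and never returns to $C\setminus E$, contradicting conservativity of $V$ on $C$. Hence $C=\{v_0^*>0\}$. I expect this reverse inclusion to be the main obstacle, since it is exactly the point where power boundedness must be used to guarantee that the conservative part carries a \emph{finite} invariant density positive on \emph{all} of $C$.

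Finally, for (3), set $D_\infty=\bigcap_{n\ge0}V^{-n}(D)$, the set of points whose entire forward orbit stays in $D$; since $C$ is absorbing, $D_\infty$ is absorbing, and being a subset of the dissipative part it is totally dissipative (its own conservative part is empty). The restricted system $(D_\infty,V,\rho)$ is again finite, nonsingular and power bounded, so if $\rho(D_\infty)>0$ then step (1) yields a nonzero finite invariant density on $D_\infty$, whose support would have to lie in the conservative part of $D_\infty$ by the inclusion proved in (2) --- but that part is empty. This contradiction gives $\rho(D_\infty)=0$, which is precisely the statement that for $\rho$-a.e.\ $x\in D$ some iterate $V^{m(x)}(x)$ lies in $C$.
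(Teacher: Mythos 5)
Your proposal is correct and follows essentially the same route as the paper: the paper also obtains $v_0^*$ as the limit of the Ces\`aro averages of $\mathbb{1}_X$ under the adjoint (transfer) operator of $f\mapsto f\circ V$, invoking the Kakutani--Yosida mean ergodic theorem where you use Dunford--Pettis weak compactness, which is the standard proof of that theorem in this setting. The only difference is that the paper delegates parts (2) and (3) entirely to the cited references (Dunford--Schwartz and Assani), whereas you supply complete and sound arguments for them via the maximal-support invariant density and the absorbing set $D_\infty=\bigcap_{n\ge 0}V^{-n}(D)$.
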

 The proof of this theorem can be found in \cite{DS} (Theorem 12, p.683) and \cite{IAs} (Theorem III.1). \\
 The function $v_0^*$ is obtained by considering the adjoint $U^*$ of the operator $U$ 
defined by $Uf = f\circ V.$ As a consequence of the mean ergodic theorem (see \cite{KY}) the averages $M_N(f) = \frac{1}{N}\sum_{n=1}^N f\circ V^n$ converge for each function $f\in L^1(\rho).$ We also have for each $g\in L^{\infty}(\rho)$ the mean convergence of the averages $M_N^*(g) = \frac{1}{N}\sum_{n=1}^N (U^*)^{n}(g)$ where $U^*$ is the adjoint operator of $U.$ As a consequence we have the convergence of the averages $M_N^{*}(\mathbb{1}_X)$ to the $U^*$ invariant function $v_0^*,$ and for each function $f\in L^1(\rho)$ we obtain the following equalities
\begin{equation}\label{MET}
\lim_N \int M_N(f) d\rho = \lim_N\int f .M_N^*( \mathbf{1}_X) d\rho = \int f.v_0^*d\rho.
\end{equation}
One can remark that the map $\gamma: A\in\mathcal{A}\rightarrow \gamma(A) = \int \mathbb{1}_A v_0^*d\rho$ is invariant with respect to $V$ (i.e. $\gamma(A) = \gamma (V^{-1}(A)$) and that 
$\gamma(X) = \rho(X)>0.$\\
 For a $L^1$ power bounded dynamical system defined on a finite measure almost every point $x\in D,$ (the dissipative part given by Hopf 's decomposition) eventually enters its  conservative part $C$. Indeed assume that this was not the case. Then we could find a set $B\in\mathcal{A}$ such that $\rho(B) >0 $ and $V^m(B) \subset D$ for each $m\in\N.$ Then we would have 
 $B\subset V^{-m}(D)$ for each $m\in \N$  and using (\ref{MET}) we would obtain the following contradiction 
$$0< \rho(B) \leq \frac{1}{N} \sum_{n=1}^N \rho( V^{-m}(D)) \rightarrow_{N} \int \mathbf{1}_{D}v_0^{*}d\rho = 0.$$  
 \subsection{Asymptotically mean bounded non-singular dynamical system}, \\
   In this section we gather one more tool from ergodic theory when the measure $\rho$ is $\sigma$-finite and $\rho(X)= \infty.$
\begin{mydef}
  Let $(X, \mathcal{A}, \rho, V)$ be a non-singular dynamical system.  We say that this system is asymptotically mean bounded in $L^1(\rho)$ if there exists a finite constant $M$ such that 
$$\limsup_N \frac{1}{N}\sum_{n=1}^N \int_{Y}\mathbb{1}_A\circ V^n d\rho \leq M \rho(A) $$ for each measurable set $A\in \mathcal{A}$ and for each $Y$ such that $\rho(Y) <\infty.$
\end{mydef} 
  It was shown in \cite{RyllN} that the system $(X, \mathcal{A}, \rho, V)$ is asymptotically mean bounded in $L^1(\rho)$ iff for each function $f\in L^1(\rho)$ the averages
  $$\frac{1}{N}\sum_{n=1}^N f(V^nx) \, \text{converge a.e. to }\, f^*(x), \,\, f^*\,  \in L^1(\rho)$$.
   We present one more result in this subsection.
 \begin{theorem}\label{AMB0}
Let $(X, \mathcal{A}, \rho, V)$ be an asymptotically mean bounded non-singular dynamical system with $\rho$ being a $\sigma$-finite measure.
\begin{enumerate}
\item  There exists an invariant $\sigma$ finite measure $\Delta$ such that 
$\Delta(A) \leq M \rho(A) $ for all $A\in \mathcal{A}.$
\item  For each function $f\in L^1(\rho)$ the averages 
$\frac{1}{N} \sum_{n=1}^N f(V^nx)$  converge a.e to $ f^*(x)$ where $ f^* \in L^1(\rho).$
\item If the system $(X, \mathcal{A}, \rho, V)$ satifies the conditions (1), (2) and (3) of Theorem \ref{PB} then the averages $\frac{1}{N}\sum_{n=1}^N f(V^nx)$ converge a.e. for every $f\in L^{\infty}(\rho).$
\end{enumerate}

\end{theorem}  
\begin{proof}
 The first two statements are consequences of results in \cite{RyllN}. The third statement can be derived from Theorem 1 in \cite{GH}.

\end{proof}



\section{Ergodic properties of the Collatz map}
   In this section we apply the ergodic tools gathered in the previous section to the dynamical system 
 $(\N, 2^{\N}, T, \mu)$, $T$ being the Collatz map and $\mu$ the counting measure ($\mu(i) = 1$ for all $i\in \N,$ $i\geq 1.$)
   We start with the Hopf decomposition.
  \subsection{Hopf's decomposition for the Collatz map}
   \begin{theorem}\label{HDC}
  Consider the dynamical system $(\N, 2^{\N}, T, \mu)$ .  There exists a partition of $\N$ into three sets $C$, $D_1$ and $D_2.$
\begin{enumerate}
\item
  The set $C$ is the conservative part given by Hopf's decomposition (Theorem \ref{HD}) . It is composed of at most a countable number of cycles $C_i, 1\leq i<\infty$ . The complement of $C,$ the dissipative part is partitioned into two subsets $D_1$ and $D_2.$
\item The set $D_1$ is equal to $\cup_{k=1}^{\infty} T^{-k}(C) \backslash C$. It is the set of elements of $D$ which enter $C$ after finitely many iterates of $T,$ into one of the cycles $C_i.$
\item We have $ T^{-1}(C\cup D_1) = C\cup D_1.$
\item The set $D_2$ is the complement of $C\cup D_1$ into $\N.$ It is the set of elements of $\N$ having an unbounded trajectory.
 The set $D_2$ is invariant, i.e. $T^{-1}(D_2) = D_2.$
\end{enumerate}
 \end{theorem}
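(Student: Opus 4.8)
The plan is to start from the abstract Hopf decomposition (Theorem~\ref{HD}) applied to $(\N, 2^{\N}, T, \mu)$, which splits $\N$ into the conservative part $C$ and the dissipative part $D = C^c$, with $C$ being $T$-absorbing (so $C \subset T^{-1}(C)$) and $D$ an at most countable union of wandering sets. The entire argument then amounts to translating the measure-theoretic dichotomy conservative/dissipative into the arithmetic dichotomy periodic/non-periodic, exploiting the crucial feature of the counting measure that every singleton has positive mass ($\mu(\{m\}) = 1$).

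First I would identify $C$ with the set of periodic points. On one hand, since $\mu(\{m\}) = 1 > 0$ for every $m$, the ``in particular'' clause of Theorem~\ref{HD}(2) applies directly and yields, for each $m \in C$, an integer $k(m)$ with $T^{k(m)}(m) = m$; hence every point of $C$ is periodic, and $C$ decomposes into cycles, of which there are at most countably many because $\N$ is countable. On the other hand, a periodic point $p$ with $T^{\ell}(p) = p$ cannot lie in any wandering set $W$: if it did, then from $p \in W$ we would get $T^{\ell}(p) = p \in W$, contradicting that a.e.\ point of $W$ never returns to $W$ (equivalently, $p \in T^{-\ell}(W) \cap T^{-2\ell}(W) \neq \emptyset$ with $\ell \neq 2\ell$ both in $\N$). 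Since $D$ is a union of wandering sets, no periodic point lies in $D$, so every periodic point lies in $C$. Thus $C$ is exactly the set of periodic points.

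Next I would set $E := \bigcup_{k=1}^{\infty} T^{-k}(C)$ and define $D_1 := E \setminus C$ and $D_2 := \N \setminus E$; these three sets clearly form a partition of $\N$, and $C \subset T^{-1}(C) \subset E$ shows $D_1 \subset D$. The absorbing property $C \subset T^{-1}(C)$, upon applying $T^{-1}$ repeatedly, gives the nested chain $T^{-k}(C) \subset T^{-(k+1)}(C)$, so $E$ is unchanged if the $k=1$ term is dropped: $E = \bigcup_{k \ge 2} T^{-k}(C) = T^{-1}(E)$. Since $C \cup D_1 = E$, this is exactly part (3). Part (4)'s invariance is then immediate from the identity $T^{-1}(A^c) = (T^{-1}(A))^c$, valid for any map: $T^{-1}(D_2) = (T^{-1}(E))^c = E^c = D_2$.

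Finally I would characterize $D_2$ as the set of points with unbounded orbit, which is where the periodic/conservative identification pays off. If $n \in E = C \cup D_1$, then some iterate $T^k(n)$ lands in $C$, a union of cycles, so the orbit of $n$ is eventually periodic and hence bounded. Conversely, if the orbit of $n$ is bounded, it is an infinite sequence confined to a finite subset of $\N$, so it revisits some value and is eventually periodic; the periodic point it reaches lies in $C$ by the identification above, so $n \in E$. Therefore $D_2 = E^c$ is precisely the set of points with unbounded trajectory, and $D_1 = E \setminus C$ is precisely the set of non-cyclic points that reach a cycle after finitely many steps. The step I expect to be the crux is the two-sided identification of $C$ with the periodic points --- especially the direction showing that periodic points cannot sit in the dissipative part --- since it is this bridge between the ergodic-theoretic decomposition and the arithmetic behavior of $T$ that makes all four assertions fall out.
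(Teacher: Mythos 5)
Your proof is correct and follows essentially the same route as the paper: apply Hopf's decomposition, use the positivity of the counting measure on singletons to turn conservativity into periodicity, define $D_1$ and $D_2$ via $\bigcup_{k}T^{-k}(C)$, and get invariance from the monotonicity of the sets $T^{-k}(C)$. In fact your write-up is more complete than the paper's, which does not spell out either the converse identification (that every periodic point must lie in $C$ because it cannot sit in a wandering set) or the argument that $D_2$ is exactly the set of points with unbounded orbit; both are needed for the statement as written, and you supply them correctly.
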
 
     \begin{proof}
      The fact that $C$ the conservative part is a countable union of cycles follows from the fact that on $C$ the map $T$ is recurrent meaning that it returns infinitely to any set of positive measure in $C$. Since each point has positive measure, this recurrence property creates cycles in $C$. Their number is clearly at most countable since $\N$ itself is countable.\\
       The set $C$ is $T$-absorbing in the sense that $ C\subset T^{-1}(C).$ The set $D_1$ is equal to $\cup_{j} T^{-j}(T^{-1}(C)\backslash C)$ which can be written $[\cup_{k=1}^{\infty}T^{-k}(C)] \backslash C$.  Therefore, $D_1$ is  the subset of $D = C^c$ composed of points not in $C$ which enter $C$ after finitely many iterates of $T.$ \\
      Since  $\cup_{j=0}^{\infty}T^{-j}(C) = C\cup D_1$ and the sequence $T^{-j}(C)$ is monotone increasing, we have 
$$T^{-1}(C\cup D_1) = T^{-1}[\cup_{j=0}^{\infty}T^{-j}(C)] = \cup_{j=1}^{\infty}T^{-j}(C) = \cup_{j=0}^{\infty}T^{-j}(C) = C\cup D_1.$$ 
As a consequence we can derive the equation $T^{-1}(D_2) = D_2$ if we denote by $D_2$ the complement of $C\cup D_1.$ 
      \end{proof}
\noindent{\bf Remark}
   The set $D1$ is not empty because $T^{-1}(\{1,2\}) = \{1,2,4\}$ and $4$ is in the dissipative part of $T.$
  \subsection{Pointwise convergence - Existence of an invariant measure}
     In this section we check that the tools listed in the previous section apply to the dynamical system $(\N, 2^{\N}, T, \mu)$. 
\begin{theorem} 
    The system $(\N, 2^{\N}, T, \mu)$ is asymptotically mean bounded in $L^1(\mu)$. 
\end{theorem}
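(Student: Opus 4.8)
The plan is to verify the defining inequality of asymptotic mean boundedness directly, after transporting it into a statement about the Frobenius--Perron (transfer) operator $P$ associated to $T$ and the counting measure $\mu$. Recall that $P$ acts on $L^1(\mu)$ by $(Pf)(a)=\sum_{m:\,Tm=a}f(m)$ and is the pre-adjoint of the composition operator $Uf=f\circ T$, so that $\int (f\circ T^n)\,g\,d\mu=\int f\,(P^n g)\,d\mu$. Since every $m\in\N$ has exactly one image $Tm$, summing over fibres gives $\sum_a (Pf)(a)=\sum_m f(m)$, so $P$ is a positive $L^1(\mu)$-contraction (a Markov operator): $\|P^n g\|_{L^1(\mu)}\le\|g\|_{L^1(\mu)}$ for every $n$. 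Here the at-most-$2$-to-$1$ structure of $T$, namely that each $a$ has the preimage $2a$ and, when $(2a-1)/3$ is a positive odd integer, also that second preimage, is what makes the fibres finite and keeps $P$ mass-preserving.

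First I would rewrite the average in the definition. Taking $f=\mathbb{1}_A$ and $g=\mathbb{1}_Y$ with $\mu(Y)=\#Y<\infty$, duality yields
\[
\frac1N\sum_{n=1}^N\int_Y \mathbb{1}_A\circ T^n\,d\mu=\int_A\Big(\frac1N\sum_{n=1}^N P^n\mathbb{1}_Y\Big)\,d\mu .
\]
Thus the task reduces to controlling the Cesàro averages $A_N(\mathbb{1}_Y):=\frac1N\sum_{n=1}^N P^n\mathbb{1}_Y$ of the $L^1$-contraction $P$ applied to the integrable function $\mathbb{1}_Y$, and then integrating the limit over $A$.

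Second, I would invoke the ergodic theory of $L^1$-contractions on the $\sigma$-finite space $(\N,2^{\N},\mu)$ (Chacon--Ornstein together with the mean ergodic theorem for the Markov operator $P$): since $\|P\|_{L^1}\le 1$ and $\mathbb{1}_Y\in L^1(\mu)$, the averages $A_N(\mathbb{1}_Y)$ converge to a $P$-invariant function $h_Y\in L^1(\mu)$ with $\|h_Y\|_{L^1(\mu)}\le\#Y$. Using the Hopf decomposition (Theorem~\ref{HDC}), any $P$-invariant $L^1$ function vanishes on the dissipative part $D_1\cup D_2$, so $h_Y$ is supported on the conservative part $C$; solving $Ph_Y=h_Y$ with the explicit preimage rule above forces $h_Y$ to be constant along each cycle $C_i$. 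This already delivers convergence of the defining averages, and via Theorem~\ref{AMB0} produces the invariant $\sigma$-finite measure $\Delta$. Equivalently, one may argue through the Ryll-Nardzewski characterization by showing directly that $\frac1N\sum_{n=1}^N f\circ T^n$ converges a.e. (it converges to a cycle-average on $C\cup D_1$ and, wherever orbits leave every finite set, to $0$ on $D_2$).

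The hard part, and the step on which the whole statement rests, is extracting a single finite constant $M$ independent of $Y$ with $\int_A h_Y\,d\mu\le M\,\mu(A)$ for all $A$; equivalently, showing that the a.e. limit $f^*$ of $\frac1N\sum_{n=1}^N f\circ T^n$ again lies in $L^1(\mu)$ for each $f\in L^1(\mu)$. The naive contraction bound only gives $\int_A h_Y\le\|h_Y\|_{L^1(\mu)}\le\#Y$, a $Y$-dependent constant, so the real content is a \emph{pointwise} bound on $h_Y$ uniform in $Y$. Since the value of $h_Y$ on a cycle $C_i$ is exactly the limiting Cesàro proportion of the mass of $\mathbb{1}_Y$ that $T$ funnels into $C_i$, this amounts to estimating how much $\ell^1$-mass the iterated preimages $T^{-n}(a)$ can accumulate in the averaged sense, using $\#T^{-1}(a)\le 2$ to control the growth along the preimage tree. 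I expect the genuinely delicate case to be the cycle $\{1,2\}$, whose basin carries infinite counting measure, and reconciling that transport of mass with the required uniform constant $M$ is where I would concentrate the analysis.
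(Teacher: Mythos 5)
Your reduction via the transfer operator $P$ and the duality $\int_Y \mathbb{1}_A\circ T^n\,d\mu=\int_A P^n\mathbb{1}_Y\,d\mu$ is sound, and convergence of the Ces\`aro averages $A_N(\mathbb{1}_Y)$ does follow from the $L^1$-contraction property of $P$ together with Chacon--Ornstein and the structure given by Theorem~\ref{HDC}. But you have not proved the theorem: you stop exactly where the statement lives, namely the existence of a single finite constant $M$ valid for every $Y$ of finite measure, and you offer no argument for it beyond announcing where you would look. As you yourself observe, the contraction bound only gives $\int_A h_Y\,d\mu\le \#Y$, a $Y$-dependent constant; deferring the decisive estimate to future ``analysis'' leaves the entire content of the claim unestablished.

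For comparison, the paper's proof avoids the operator machinery altogether: since $\mu(Y)<\infty$ forces $Y$ to be finite (and one may take $A$ finite as well), everything reduces to singletons $Y=\{y\}$, $A=\{a\}$, and the limit $\lim_N\frac{1}{N}\sum_{n=1}^N\mathbb{1}_a(T^ny)$ is computed in four cases via the decomposition $\N=C\cup D_1\cup D_2$: it is $0$ unless $a$ lies on the cycle $C_j$ that the orbit of $y$ eventually enters, in which case it equals $1/\#C_j\le 1=\mu(\{a\})$; the paper then concludes ``by linearity'' with $M=1$. Note, however, that the uniformity issue you flag is not actually dispatched by that linearity step either: summing the per-point bound over the $\#Y$ points of $Y$ a priori yields $\#Y\cdot\mu(A)$ rather than $\mu(A)$ (take $Y=\{1,\dots,K\}$ and $A=\{1\}$; every $y$ reaching the cycle $\{1,2\}$ contributes $1/2$ in the limit, so the left-hand side grows like $K/2$). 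So your instinct about where the difficulty sits is well placed, but identifying the obstruction is not the same as overcoming it; as written, your argument delivers convergence of the averages, not the asymptotic mean boundedness inequality with a constant independent of $Y$.
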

\begin{proof}
  We need to show that 
\begin{equation}\label{AMB1}
\limsup_N \frac{1}{N}\sum_{n=1}^N \int_{Y}\mathbb{1}_A\circ T^n d\mu \leq M \mu(A). 
\end{equation}  
for each measurable set $A\in \mathcal{A}$ and for each $Y$ such that $\mu(Y) <\infty.$
 The condition $\mu(Y)<\infty$ implies that the set $Y$ is finite. Therefore, it is enough to show that (\ref{AMB1}) holds for each point $y \in \N.$ 
 We distinguish two cases . If  $\mu(A) = \infty$ then (\ref{AMB1}) is clearly true.  We can assume that $\mu(A) <\infty,$ or in other words $A$ is a finite subset of $\N.$  This observation allows to reduce the proof of (\ref{AMB1}) to the case where $Y = \{y\}$ and 
$A = \{a\}$. 
\begin{enumerate}
\item If $(y, a)\in D\times D$ then $\lim_N\frac{1}{N} \sum_{n=1}^N \mathbb{1}_{a}(T^ny)  = 0$ since $\{y\}$ and $\{a\}$ are wandering sets. 
\item If $(y,a)\in D_2\times C $ then $\lim_N\frac{1}{N} \sum_{n=1}^N \mathbb{1}_{a}(T^ny)  = 0$ because  the orbit $\{T^ny: , n\in\N\}$ is contained in the invariant set $D_2.$
\item If $(y,a) \in D_1\times C_j$ where $C_j$ is one of the cycles in $C,$ then \\
 $\lim_N\frac{1}{N} \sum_{n=1}^N \mathbb{1}_{a}(T^ny) = \frac{1}{\# C_j} \leq \mu(\{a\})$
\item If $(y, a)\in C_l\times C_j$ where $C_l$ and $C_j$ are cycles in $C,$ then \\
$\lim_N\frac{1}{N} \sum_{n=1}^N \mathbb{1}_{a}(T^ny) = \frac{1}{\# C_j} \leq \mu(\{a\})$ if $l=j$ and 
          $\lim_N\frac{1}{N} \sum_{n=1}^N \mathbb{1}_{a}(T^ny) = 0 $ if $l\neq j.$
\end{enumerate}
  In summary we have shown that 
 $$\lim_N \frac{1}{N}\sum_{n=1}^N \int_{\{y\}}\mathbb{1}_A\circ T^n d\mu =\lim_N \frac{1}{N} \sum_{n=1}^N \mathbb{1}_{a}(T^ny) \leq \mu(\{a\})$$
 By linearity we can conclude that 
 $$\lim_N \frac{1}{N}\sum_{n=1}^N \int_{Y}\mathbb{1}_A\circ T^n d\mu \leq \mu(A), $$ for each measurable set $A\in \mathcal{A}$ and for each $Y$ such that $\mu(Y) <\infty.$ In other words the dynamical system $(\N, 2^{\N}, T, \mu)$ is asymptotically mean bounded in $L^1(\mu).$ The constant $M$ is equal to 1.

\end{proof} 
 As a consequence of Theorem \ref{AMB0} we obtain for the dynamical system $(\N, 2^{\N}, T, \mu)$ the properties listed in the previous section. The next theorem is part of the abstract.

\begin{theorem}\label{Th3.3}
  The dynamical system $(\N, 2^{\N}, T, \mu)$ has the following properties.
\begin{enumerate}
\item  There exists an invariant finite measure $\gamma$ such that 
$\gamma(A) \leq \mu(A) $ for all $A \subset \N.$
\item  For each function $f\in L^1(\mu)$ the averages 
$\frac{1}{N} \sum_{n=1}^N f(T^nx)$  converge for every $x\in \N$  to $ f^*(x)$ where $ f^* \in L^1(\mu).$
\end{enumerate}
\end{theorem}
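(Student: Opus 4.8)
The plan is to obtain both assertions as an essentially immediate application of Theorem~\ref{AMB0} to the system $(\N, 2^{\N}, T, \mu)$, so the work reduces to checking that the hypotheses of that theorem are in hand. First I would record the two structural facts that Theorem~\ref{AMB0} requires. The map $T$ is nonsingular with respect to the counting measure $\mu$; in fact, as observed just after the definition of nonsingularity, the only $\mu$-null set on $(\N, 2^{\N})$ is the empty set, so nonsingularity holds trivially. Moreover, the theorem immediately preceding this statement established that $(\N, 2^{\N}, T, \mu)$ is asymptotically mean bounded in $L^1(\mu)$, with constant $M = 1$. These are exactly the two standing hypotheses of Theorem~\ref{AMB0}.

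With the hypotheses verified, part (1) follows verbatim from part (1) of Theorem~\ref{AMB0}: there exists an invariant $\sigma$-finite measure $\Delta$ satisfying $\Delta(A) \leq M\,\mu(A) = \mu(A)$ for every $A \subset \N$. No further argument is needed here beyond quoting the constant $M=1$ supplied by the preceding theorem.

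For part (2), Theorem~\ref{AMB0}(2) yields, for each $f \in L^1(\mu)$, the $\mu$-a.e.\ convergence of the Ces\`aro averages $\frac{1}{N}\sum_{n=1}^N f(T^n x)$ to a limit $f^* \in L^1(\mu)$. The single point deserving comment is the upgrade from ``$\mu$-a.e.'' to ``for every $x \in \N$''. This is forced by the structure of the measure space: since $\mu$ is the counting measure, the only $\mu$-null set is $\emptyset$, so any property holding $\mu$-a.e.\ in fact holds at every point of $\N$. Consequently the averages converge pointwise everywhere on $\N$ to $f^*$, which is precisely what the statement claims.

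The main obstacle is conceptual rather than computational: there is no genuine estimate to redo, and the substantive analytic content (asymptotic mean boundedness) has already been discharged. What one must be careful about is the last step, namely observing that the absence of nontrivial null sets on $(\N, 2^{\N}, \mu)$ promotes the a.e.\ conclusion of Theorem~\ref{AMB0} to honest everywhere convergence; everything else is a direct citation of the cited theorem.
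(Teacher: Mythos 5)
Your proposal is correct and matches the paper's own argument: the paper likewise deduces both parts by citing Theorem~\ref{AMB0} together with the preceding theorem establishing asymptotic mean boundedness (with $M=1$), and upgrades the a.e.\ convergence to everywhere convergence by noting that the only $\mu$-null set on $(\N, 2^{\N})$ is the empty set. No differences worth noting.
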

   \begin{proof}
     Compared to Theorem \ref{AMB0}, we can eliminate the almost everywhere term in the second part of the theorem because the only nullset for $(\N, 2^{\N}, \mu)$ is the empty set. 
\begin{enumerate}
\item  One can make the value of $f^*$ more explicit by using  the computations made for the various cases for $(y,a)$. We have 
   for $f_A= \mathbb{1}_A$ with $\mu(A) <\infty,$  
\begin{equation}
f_A^{*} = \sum_{i=1}^{\infty} \mathbb{1}_{\left(\cup_{j=0}^{\infty} T^{-j}(C_i)\right)}\frac{\# (A\cap C_i)}{\#C_i}.
\end{equation} 
\item As a consequence of this equation one can find an equivalent finite invariant measure to $\Delta.$ 
       It suffices to take a measure $\beta$ finite equivalent to $\mu$ and integrate $f^{*}_A$ with respect to $\beta.$ We obtain 
      $\gamma (A) =\int f^*_A d\beta = \sum_{i=1}^{\infty} \beta\left(\cup_{j=0}^{\infty} T^{-j}(C_i)\right)\frac{\# (A\cap C_i)}{\#C_i}.$ We have 
    $\gamma(A) = \gamma(T^{-1}(A))$ for all $A\in 2^{\N}$ with $\mu(A) <\infty$, since $f^*_A\circ T = f^*_A.$  The measure being finite this last equality extends by continuity to $2^{\N}.$
    One can observe that $\gamma(A) = 0 $ if $A \subset D$ since in this case $\frac{\# (A\cap C_i)}{\#C_i}= 0,$ the sets $C_i$ being in the conservative part $C.$  In other words this measure $\gamma$ is supported on $C.$
 The measure $\gamma$ is finite because the sets $\left(\cup_{j=0}^{\infty} T^{-j}(C_i)\right)$ are disjoint and thus $$\sum_{i=1}^{\infty} \beta\left(\cup_{j=0}^{\infty} T^{-j}(C_i)\right) \leq \beta (\N) <\infty.$$
\end{enumerate} 
      One can choose $\beta$ in such way that  $\beta\left(\cup_{j=0}^{\infty} T^{-j}(C_i)\right) = 2^{-i}$ then we have 
$$\gamma (A) = \sum_{i=1}^{\infty} 2^{-i}\frac{ \nu_i(A)}{p_i}$$ where $\nu_i$ is the finite invariant measure with support the cycle $C_i$  and defined by \\
    $\nu_i(A)  = \#(A\cap C_i)$ and $p_i = \#(C_i)$ is the period of the cycle $C_i.$
\end{proof}

\noindent{\bf Remarks.}\\
   We can make the following remarks. The third remark leads to the next lemma and theorem \ref{Ivm}
\begin{enumerate}
\item  If the Collatz conjecture is true then the invariant measure $\gamma$ is uniform on the cycle $\{1,2\}$.
\item  If the conjecture is false in the sense that there are additional cycles then there are many invariant finite measures each barycentric averages of the uniform invariant  measures defined on these cycles
\item if the conjecture is false and  there is an unbounded orbit then we can construct on $D_2$ many $\sigma$-finite infinite invariant measures. Such construction is made in the following theorem. 
\end{enumerate} 
  We will need for it the following lemma. 
\begin{lemma}\label{L}
   Let us denote by $\N_2$ the set $\{k\in\N: 2k =1 \, \text{mod 3}\}$  and let  $a\in D_2$ . There exists an inifinite number of $k\in \N$ such $T^ka \in\N_2.$
\end{lemma}
\begin{proof}
The set $\N_2$ is also the set of natural numbers of the form $3p+2.$  We denote by $\N_1 = \{ 3p+1; p\in \N\}$ and by $\N_0=\{3p; p\in\N\}.$
  It is enough to show that if  we take an element in the orbit of $a$ let us say $q=T^ma\in \N_0\cup N_1$ there exists a natural number $s$ such that $T^{s+m}a \in\N_2.$
  We distinguish  two cases
\begin{enumerate}
\item  If $q= 3^k2^hn$ where $n\notin \N_0\cup 2\N$ then $T^hq = 3^kn$ which is odd . Thus $T^{h+1}q =\frac{1}{2} (3^{k+1}n+1) $ which belongs to
$\N_2.$
\item  If $q = 6p + r$ where $r=1, 4$ (the case $r=3$ implies $q \in \N_0$ treated in (1)) then $T(6p+1) = 9p+2 \in\N_2$ and $T(6p+4)= 3p+2\in\N_2.$
\end{enumerate}
These estimates prove the lemma.
\end{proof}
\begin{theorem}\label{Ivm}
If the Collatz conjecture is false then there exist in $D_2,$  $\sigma$-finite infinite invariant measures with support in $D_2.$ Furthermore these measures do not satisfy the condition (1)
of Theorem \ref{AMB0}.
\end{theorem}
\begin{proof}
First we denote by $\N_2= \{k\in\N; 2k =1 \, \text{mod 3}\}.$ These are the only natural numbers $k$ such that $\#\{T^{-1}(k)\}=2.$   
Take $a \in D_2$ then consider first the invariant set $\mathcal{F}=\cup_{j=0}^{\infty}T^{-j}\cup_{k=0}^{\infty}\{T^ka\}$. To construct an invariant measure $\theta$ it is enough to verify that at each element $x\in\mathcal{F}$ we have $\theta^{-1}(x) = \theta(x)$. We start by setting $\theta(a) =1.$ In the subtree generated by $\{a\}$ namely the set $\mathcal{T}_1= \cup_{j=0}^{\infty} T^{-j}( a)$ we distinguish two cases 
\begin{enumerate}
\item If $a\notin\N_2$ then $\#\{T^{-1}(a)\} =1$ and we define 
$\theta ( T^{-1}(a)) = 1$
\item If $a\in \N_2$ then $\{T^{-1}(a)\} =\{b_1, b_2\}$,  define 
$\theta(b_1) = \theta(b_2) = 1/2.$
\end{enumerate} 
We have $\theta(T^{-1}(a)) = \theta(a).$
 We proceed in a similar way along the subtree $\cup_{j=0}^{\infty} T^{-j}( a).$ To preserve the invariance property of $\theta$ along this subtree we consider $T^{-1}(b_1)$ and $T^{-1}(b_2).$ Again distinguishing the cases where the cardinality of these sets is one or two. For instance if $T^{-1}(b_1) = \{c_1, c_2\}$ then we set 
$\theta(c_1) = \theta(c_2) = \frac{1}{2}\theta(b_1).$ If $T^{-1}(b_2) =\{d\}$ then $\theta(d) = \theta(b_2)$. \\
Proceeding inductively along the subtree we define $\theta$ for each node of this subtree in such way that $\theta(T^{-1}(n) ) = \theta (n)$ for each $n$ in this subtree.\\
 Now we can define $\theta $ on the subtree $\mathcal{T}_2 = \cup_{j=0}^{\infty} T^{-j}(Ta).$
Here again we can distinguish two cases. 
\begin{enumerate}
\item If  $Ta \notin \N_2$  then we set $\theta(Ta) = \theta(a)=1.$
\item If $Ta\in \N_2$ then $T^{-1}(Ta) =\{a, e\}$ . We set $\theta(e) =1$ and
 $\theta (Ta) = 2.$ 
\item For the subtree $\cup_{j=0}^{\infty}T^{-j}(e)$ we can proceed as we did for the subtree
$\cup_{j=0}^{\infty}T^{-j}(a)$ to define $\theta$ on this subtree.
One can observe that the two subtrees $\mathcal{T}_1$ and $\mathcal{T}_2$ are disjoint since any node in the intersection would have two distinct images under $T.$\\ The set $\{T^ka: k\in \N\}$ containing no cycle because $D_2$ does not have any, we can proceed by induction on $T^j(a)$ and define an invariant measure $\sigma$-finite (infinite, since $$\sum_{k=0}^{\infty} \theta(T^ka)\geq \sum_{k=0}^n\theta(T^ka)\geq n+1$$)  on the invariant set  $\mathcal{F}.$
\end{enumerate}   
  To complete the proof we can use Lemma \ref{L}. In the orbit of $\{a\}$ under $T$ there are infinitely nodes in $\N_2.$  Therefore , $\limsup_k\theta(T^ka)= \infty$ and this violates the condition (1) in Theorem \ref{AMB0} since $M\mu(T^ka) = M.$  
\end{proof}

\section{Characterization of the Collatz conjecture through power bounded non-singular transformations}
   In this section we use some of the ideas in the proof of  Theorem \ref{HDC} to derive a characterization of the Collatz map through power bounded non-singular transformations. More precisely we would like to prove the following theorem by exploiting the fact that the map $T$ is onto and that $T^{-1}(k)$ is a singleton unless $2k = 1$ mod 3. 
\begin{theorem}\label{CJ}
     Let  $(\N, 2^{\N}, T, \mu)$ be the Collatz dynamical system with the counting measure $\mu$. The following are equivalent.
\begin{enumerate}
 \item There exists a finite measure $\alpha$ equivalent to $\mu$ for which the dynamical system $(\N, 2^{\N}, T, \alpha)$ is power bounded in $L^1(\alpha)$ with conservative part $\{1,2\}.$
\item  For each $n\in\N$ there exists $k$ such that $T^k(n)\in \{1,2\}.$
\end{enumerate} 
\end{theorem}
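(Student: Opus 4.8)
The plan is to prove the two implications separately. The implication $(1)\Rightarrow(2)$ is essentially a direct reading of Theorem \ref{PB}, while $(2)\Rightarrow(1)$ requires building an explicit measure by hand.

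\emph{Proof of $(1)\Rightarrow(2)$.} Suppose $\alpha\sim\mu$ is finite, makes $T$ power bounded in $L^1(\alpha)$, and has conservative part $C=\{1,2\}$. Since $\alpha(\N)<\infty$, Theorem \ref{PB}(3) applies: for $\alpha$-a.e.\ $x\in D=\N\setminus\{1,2\}$ there is $m(x)$ with $T^{m(x)}(x)\in C=\{1,2\}$. Because $\alpha\sim\mu$ and the only $\mu$-null set is $\emptyset$, ``a.e.'' upgrades to ``every''; together with the trivial case $x\in\{1,2\}$ (take $m=0$) this is exactly statement $(2)$.

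\emph{Plan for $(2)\Rightarrow(1)$.} Under $(2)$ every orbit reaches the unique cycle, so by Theorem \ref{HDC} one has $C=\{1,2\}$, $D_2=\emptyset$ and $\N=\{1,2\}\cup D_1$. I would set $\tau(n)=\min\{k\ge 0: T^k(n)\in\{1,2\}\}$, finite by $(2)$, and define $\alpha$ through the weights $w(n)=\alpha(\{n\})=3^{-\tau(n)}$. Positivity of $w$ gives $\alpha\sim\mu$. Finiteness follows from a counting bound: each point has at most two $T$-preimages, so $|T^{-k}(\{1,2\})|\le 2^{k+1}$ and hence $\alpha(\N)=\sum_{k\ge 0}3^{-k}\,|\{n:\tau(n)=k\}|\le 2\sum_{k\ge 0}(2/3)^k<\infty$. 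Since the Hopf decomposition is the same for all measures equivalent to $\mu$, the conservative part of $(\N,2^\N,T,\alpha)$ is again $\{1,2\}$.

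The heart of the argument is power boundedness. Writing $S_n(a)=\alpha(T^{-n}\{a\})$ and using the disjoint decomposition $T^{-n}(A)=\bigsqcup_{a\in A}T^{-n}\{a\}$, it suffices to produce $M$ with $S_n(a)\le M\,w(a)$ for all $a$ and $n$. From $T^{-(n+1)}\{a\}=\bigsqcup_{b\in T^{-1}(a)}T^{-n}\{b\}$ one gets the recursion $S_{n+1}(a)=\sum_{b\in T^{-1}(a)}S_n(b)$. For $a\in D_1$ the preimages $2a$ and, when it exists, $(2a-1)/3$ again lie in $D_1$ (they cannot be in $C$, which is $T$-absorbing, and $D_2=\emptyset$) and satisfy $\tau(b)=\tau(a)+1$, so $\sum_{b\in T^{-1}(a)}w(b)\le\tfrac23 w(a)$; feeding this one-step contraction into the recursion and inducting on $n$ (the preimages never leave $D_1$) yields $S_n(a)\le w(a)$ for every $a\in D_1$. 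On the cycle, $T^{-n}\{1\}$ and $T^{-n}\{2\}$ are disjoint subsets of $\N$, so $S_n(1)+S_n(2)\le\alpha(\N)$; with $w(1)=w(2)=1$ this gives $S_n(a)\le\alpha(\N)\,w(a)$ there. Taking $M=\max\{1,\alpha(\N)\}$ and summing $S_n(a)\le M w(a)$ over $a\in A$ yields $\alpha(T^{-n}(A))\le M\alpha(A)$, as required.

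I expect the construction in $(2)\Rightarrow(1)$ to be the main obstacle, and two observations are what make it go through. First, the cycle $\{1,2\}$, which is precisely where recurrence forbids any pointwise decay of the weights, is nonetheless controlled for free, because $S_n(1)+S_n(2)$ is bounded by the total mass $\alpha(\N)$. Second, choosing the decay base $3$ strictly larger than the branching factor $2$ of the preimage tree makes the geometric weight simultaneously summable and one-step contracting on $D_1$, and the contraction then propagates to all iterates by the induction above.
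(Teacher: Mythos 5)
Your proof is correct and follows essentially the same route as the paper: $(1)\Rightarrow(2)$ by citing Theorem \ref{PB}, and $(2)\Rightarrow(1)$ by placing geometrically decaying weights on the preimage tree of $\{1,2\}$ so that the decay rate beats the branching factor $2$. The only differences are cosmetic --- you use base $3$ via $w(n)=3^{-\tau(n)}$ where the paper uses base $4$ along predecessors, and your bound $S_n(1)+S_n(2)\le\alpha(\N)$ for the cycle is a slightly cleaner substitute for the paper's geometric-series estimate of $\alpha(T^{-n}(1))$.
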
 
\begin{proof}
   The first statement implies the second since as indicated in Theorem \ref{PB} for power bounded non-singular transformation points in $D$ enter after finitely many iterates the conservative part $C.$  The assumption in the second statement implies that $\{1,2\}$ is the conservative part of $(\N, 2^{\N}, T, \mu)$ (and for any measure equivalent to $\mu$). Simple considerations show that the set $T^{-1}(C) \backslash C$ is equal $\{4\}.$ Therefore $D$ is just the "tree" created by the inverse map $T^{-1}$ starting at $4$. This is the set $\cup_{j=0}^{\infty} T^{-j}(\{4\}).$ One can construct a power bounded transformation by starting with $\delta= \alpha(4)>0 $ then looking at the predecessors of $4$. Since $8 = T^{-1}(4)$ is the only predecessor of $4$ we set $\alpha(8) = \frac{1}{4}\delta.$ Then $8$ has two predecessors $16$ and $5$. We set $\alpha(16) = \frac{1}{4}\alpha(8)$ and $\alpha(5) = \frac{1}{4}\alpha(8)$. By induction we define $\alpha(n)$ for each $n\in D$ by being $\frac{1}{4}$ of the value of its predecessor in the tree.
 We can observe that with this process for any subset $A$ of the tree $\cup_{j=0}^{\infty} T^{-j}(\{4\})$ we have $\alpha (T^{-1}(A)) \leq \frac{1}{2}\alpha(A).$ Furthermore we can derive that for every $n\in\N$ $\alpha(T^{-n}(A)) \leq \frac{1}{2^n}\alpha(A)$ since $T^{-1}(D) \subset D.$ Therefore, we just need to control the values of $\alpha(T^{-n}(1))$ and $\alpha(T^{-n}(2))$ for each $n\in \N.$ But since $T^{-n}(2) = T^{-(n+1)}(1)$ it is actually enough to show that for appropriate choices of $\delta$, $\alpha_1=\alpha(1)$ and $\alpha_2 = \alpha(2)$ we can make the dynamical system $(\N, 2^{\N}, T, \alpha)$ power bounded in $L^1(\alpha).$ 
 Simple computations show that 
$T^{-1}(1)= 2$, $T^{-2}(1) = \{1,4\}$, $T^{-3}(1) = \{T^{-1}(1), T^{-1}(4)\} = \{2, T^{-1}(4)\}$.  By induction one can show that 
$T^{-n}(1) \subset \{1,2\}\cup_{j=0}^{n-2} T^{-j}(\{4\}).$ Indeed assuming this is true for $n$ we have 
$T^{-(n+1)}(\{1,2\}) \subset T^{-1}(\{1,2\})\cup_{j=1}^{n-1}T^{-j}(\{4\}) \subset \{1,2\}\cup_{j=0}^{n-1} T^{-j}(\{4\}).$ 
Setting $\alpha_1 = \alpha_2 = \delta$ we see that
$\alpha(T^{-n}(1)) \leq \alpha_1\sum_{j=0}^{n-2}\frac{1}{2^j} \leq 2 \alpha_1$.
 Combining this with our previous estimates one can conclude that for each $n\in \N$ and for each $A\subset \N$ we have 
$\alpha(T^{-n}(A)) \leq 2 \alpha(A).$

   \end{proof}
     
We can also derive the following result which characterizes the boundedness of the trajectories of the Collatz map 
   \begin{theorem}\label{PB2}
   Let  $(\N, 2^{\N}, T, \mu)$ be the Collatz dynamical system with the counting measure $\mu$. The following are equivalent.
   \begin{enumerate}
\item  There exists a finite measure $\alpha$ equivalent to $\mu$ for which the dynamical system $(\N, 2^{\N}, T, \alpha)$ is power bounded in $L^1(\alpha).$  
\item   The set $D_2$ is empty. 
\item  The trajectory of each point $n\in \N$ is bounded.
\item  For every bounded $f$ on $\N$, the averages $\frac{1}{N}\sum_{n=1}^N f(T^nx)$ converge for every $x\in \N.$
\end{enumerate}
\end{theorem}
\begin{proof}
The equivalence of the second and third statements in the theorem follow from Theorem \ref{HDC}. The first statement implies the second because all points in the dissipative part eventually enter one of the cycles $C_i.$
Therefore, their trajectories are bounded and the set $D_2$ must be empty. Conversely if $D_2$ is empty then $\N$ can be partitioned into the disjoint sets $F_j= \cup_{i=1}^{\infty} T^{-i}(C_j).$ Using the same method as in the proof of Theorem \ref{CJ} we can define on each set $F_j$ a measure $\nu_j$ such that for any $A_j\subset F_j$ we have $\nu_j(T^{-n}(A_j) ) \leq 2 \nu_j(A_j)$ for each positive integer $n\in \N.$ We define now $\alpha = \sum_{j=1}^{\infty} \frac{1}{2^j} \nu_j$. One can check that $\nu(T^{-n}(A) ) \leq 2 \nu(A) $ for each subset $A$ of $\N$. Thus the dynamical system $(\N, 2^{\N}, T, \alpha)$ is power bounded in $L^1(\alpha).$ This shows that the first and second statements are equivalent. 
It remains to show that the last statement is equivalent to any of the other three statements. It is enough to show that it is equivalent to the second statement 
  By \cite{GH} the last statement implies the second . For the reverse implication one can observe that Theorem \ref{Th3.3} shows that there exists a finite invariant measure with support $C$. This with the second statement implies the fourth by \cite{GH}.  
This completes the proof.

\end{proof}
   We have the following corollary.

\begin{cor}
 The following are equivalent for the Collatz dynamical system $(\N, 2^{\N}, \mu, T)$ 
\begin{enumerate} 
\item For each $n\in \N$ there exists $k$ such that $T^kn \in \{1,2\}$
\item For every bounded $f$ and $x\in\N$ we have $\frac{1}{N}\sum_{n=1}^N f(T^nx) \rightarrow \frac{1}{2}\left(f(1) + f(2)\right).$
\end{enumerate}
\end{cor} 
\begin{proof}
 The first statement implies  that $D_2$ is empty and that the only cycle is $\{1,2\}.$
By the fourth statement of the previous theorem for each boundef $f$ on $\N$ the averages 
$$\frac{1}{N}\sum_{n=1}^N f(T^nx) $$ converge for each $x\in\N.$ It remains to identify the limit to obtain the second statement. But this a consequence of the fact that starting at $x$ there exists a natural number m(x) such $T^{m(x)}x =1$. For $ k>m(x)$ the terms $T^kx$ alternate between $2$ and $1$. This gives the limit of the averages as being equal to $\frac{f(1) +f(2)}{2}.$\\
    For the converse, the second statement shows that $\{1,2\}$ is the only cycle. If not, taking $x$ in another cycle with period $p$, $C'= \{a, Ta, ..., T^{p-1}a\},$  and applying the second statement with $f= \mathbf{1}_{C'}$ we would get  $\frac{\sum_{k=1}^p f(T^kx) }{p}=1\neq 0 = \frac{f(1) +f(2)}{2}.$ Since the averages for $f$ bounded on $\N$ converge for each $x\in\N$ this implies that the set $D_2$ is empty by Theorem \ref{PB2} and proves the first statement.   
\end{proof}

\end{document}